\newtheorem{thm}{Theorem}[section]
\newtheorem{lem}[thm]{Lemma}
\newtheorem{cor}[thm]{Corollary}
\newtheorem{claim}{Claim}
\def\Z{\mathbb Z}
\def\R{\mathbb R}
\def\diam{\textrm{diam}}
\journal{European J. Combin.} 
\begin{document}

\begin{frontmatter}

\title{Mirror graphs: graph theoretical characterization of reflection arrangements and finite Coxeter groups}

\author[LJ]{Tilen Marc}
				\ead{tilen.marc@imfm.si}
				
\address[LJ]{Institute of Mathematics, Physics, and Mechanics, Jadranska 19, 1000 Ljubljana, Slovenia}

\begin{abstract}
Mirror graphs were introduced by Brešar et al.~in 2004 as an intriguing class of graphs: vertex-transitive, isometrically embeddable into hypercubes, having a strong connection with regular maps and polytope structure. In this article we settle the structure of mirror graphs by characterizing them as precisely the Cayley graphs of the finite Coxeter groups or equivalently the tope graphs of reflection arrangements -- well understood and classified structures. We provide a polynomial algorithm for their recognition.
\end{abstract}
\begin{keyword}
mirror graphs \sep partial cubes \sep reflection arrangements \sep Cayley graphs \sep oriented matroids
\end{keyword}
\end{frontmatter}
\sloppy
\section{Introduction and preliminaries}
%
%

Brešar, Klavžar, Lipovec, and Mohar gave the following definition in \cite{brevsar2004cubic}. Let $G$ be a simple, connected graph. Call a partition $P = \{E_1, E_2,\ldots, E_k\}$ of  edges in $G$ a \emph{mirror partition} if for every $i\in \{1,\ldots ,k\}$, there exists an automorphism $\alpha_i$ of $G$ such that:

\begin{enumerate}[(i)]\label{def_mirror_auto}
\item for every edge $uv \in E_i$: $\alpha_i(u) = v$ and $\alpha_i(v) = u$
\item $G - E_i$ consists of two connected components $G_i^1$ and $G_i^2$, and $\alpha_i$ maps $G_i^1$
isomorphically onto $G_i^2$
\end{enumerate}
A graph that has a mirror partition is called a \emph{mirror graph.} By definition they are highly symmetrical graphs. In \cite{brevsar2004cubic} it was shown that all mirror graphs are vertex-transitive, and certain connections with regular maps and polytope structures were established, indicating strong geometric properties of these graphs.

A more surprising result that they provided is that every mirror graph can be isometrically embedded (in the shortest path metric) into a hypercube graph $Q_d$, where $Q_d$ is a graph whose vertices are vectors in $\{1,0\}^d$ and two vertices are adjacent if they differ in exactly one coordinate. Graphs with this property are called \textit{partial cubes}. In particular a mirror partition of edges in a mirror graph must coincide with the coordinate partition of the edges in the embedding. This implies that classes $E_1, E_2,\ldots, E_k$ can be recognized by the so called relation $\Theta$ on the edges of graph $G$ defined as follows: $ab \Theta xy$ if $d(a,x) + d(b,y)\neq d(a,y) + d(b,x)$, where $d$ is the shortest path distance function. Thus a mirror partition, if it exists, is unique and easily computable based on the metric of the graph. It will follow from our results that also mirror automorphisms can be recognized efficiently.

Since partial cubes have inherent metric properties, the connection described above can be used to to better understand both classes of graphs. In fact, one of the motivations of mirror graphs was to build examples of cubic partial cubes -- intriguing class of graphs with many surprising properties. Nevertheless, the connection does not directly explain geometric properties exposed in examples of mirror graphs of \cite{brevsar2004cubic}. Even vertex-transitivity of mirror graphs is not a characterizing properties since vertex-transitive partial cubes are only classified in the cubic case \cite{marc2015classification} and not all of them are mirror graphs. In this paper we will expose a connection between mirror graphs and (\emph{realizable}) \emph{oriented matroids}, explaining the geometric properties of the former. For a general definition of oriented matroids and their connections to graphs see \cite{bandelt2015coms,MR1744046}; in this paper we will limit ourselves to realizable oriented matroids, for the sake of simplicity.  

One way to describe them is to consider a set of $m>0$ pairwise different hyperplanes $\{H_1,\ldots , H_m\}$ in $\mathbb{R}^n$, for some $n>0$, all incident with the origin of the space. Such a \emph{hyperplane arrangement} cuts $\mathbb{R}^n$ into connected spaces called \emph{chambers}. The \emph{tope graph} of a hyperplane arrangement is a graph whose vertices are chambers and two chambers are adjacent if they are separated by a single hyperplane. Tope graphs can be isometrically embedded in hypercubes \cite[Proposition 4.2.3]{MR1744046}, but the reverse problem of characterizing graphs which are tope graphs of hyperplane arrangements is an open problem. A more broad problem of characterizing graphs which correspond to arrangements of pseudohyperplanes in a real projective space is equivalent to characterizing the tope graphs of oriented matroids and was answered in \cite{da1995axioms}.

%
%

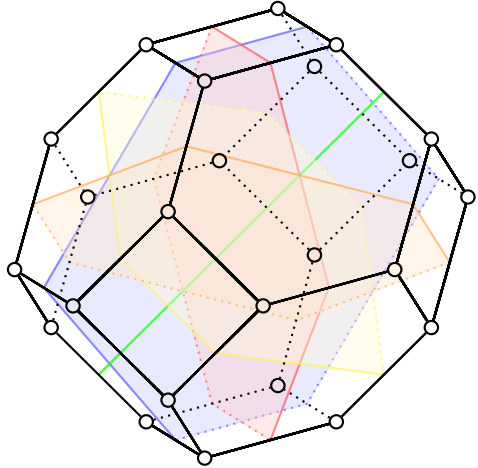
\begin{figure}[t]
\begin{center}
\begin{tikzpicture}[scale=1.25,thick,fill opacity=.4,draw opacity=1]
\tikzstyle{vertex}=[draw, circle, inner sep=0pt, minimum size =5pt]

\filldraw[
        draw=blue!20, draw opacity=0.4,%
        fill=blue!20,%
    ]          (-0.5,2,0.5)
            -- (0.5,2,-0.5)
            -- (1.5,0,-1.5)
            -- (0.5,-2,-0.5)
            -- (-0.5,-2,0.5)
            -- (-1.5,0,1.5)
            -- cycle;

\draw[
        draw=blue, draw opacity=0.4, dotted
    ] (0.5,2,-0.5)
            -- (1.5,0,-1.5)
            -- (0.5,-2,-0.5)
            -- (-0.5,-2,0.5);

\draw[
        draw=blue, draw opacity=0.4,
    ]
            (0.5,2,-0.5)
            -- (-0.5,2,0.5)
            -- (-1.5,0,1.5)
            -- (-0.5,-2,0.5);

\filldraw[
        draw=red!20, draw opacity=0.4,%
        fill=red!20,%
    ]          (0.5,2,0.5)
            -- (-0.5,2,-0.5)
            -- (-1.5,0,-1.5)
            -- (-0.5,-2,-0.5)
            -- (0.5,-2,0.5)
            -- (1.5,0,1.5)
            -- cycle;

\draw[
        draw=red, draw opacity=0.4, dotted
    ] (-0.5,2,-0.5)
            -- (-1.5,0,-1.5)
            -- (-0.5,-2,-0.5)
            -- (0.5,-2,0.5);

\draw[
        draw=red, draw opacity=0.4,
    ]
            (-0.5,2,-0.5)
            -- (0.5,2,0.5)
            -- (1.5,0,1.5)
            -- (0.5,-2,0.5);
            
\filldraw[
        draw=green, draw opacity=0.4,%
        fill=green!20,%
    ]          (1.5,1.5,0)
            -- (0.5,0.5,-2)
            -- (-1.5,-1.5,0)
            -- (-0.5,-0.5,2)
            -- cycle;
            
\draw[
        draw=green, draw opacity=0.4,
    ]
               (1.5,1.5,0)
            -- (0.5,0.5,-2)
            -- (-1.5,-1.5,0)
            -- (-0.5,-0.5,2);
            
\filldraw[
        draw=yellow!20, draw opacity=0.4,%
        fill=yellow!20,%
    ]          (0.5,-0.5,2)
            -- (1.5,-1.5,0)
            -- (0.5,-0.5,-2)
            -- (-0.5,0.5,-2)
            -- (-1.5,1.5,0)
            -- (-0.5,0.5,2)
            -- cycle;

\draw[
        draw=yellow, draw opacity=0.4, dotted
    ] (1.5,-1.5,0)
            -- (0.5,-0.5,-2)
            -- (-0.5,0.5,-2)
            -- (-1.5,1.5,0);

\draw[
        draw=yellow, draw opacity=0.4,
    ]  (1.5,-1.5,0)
            -- (0.5,-0.5,2)
            -- (-0.5,0.5,2)
            -- (-1.5,1.5,0);

\filldraw[
        draw=orange!20, draw opacity=0.4,%
        fill=orange!20,%
    ]          (-2,0.5,0.5)
            -- (0,1.5,1.5)
            -- (2,0.5,.5)
            -- (2,-0.5,-.5)
            -- (0,-1.5,-1.5)
            -- (-2,-0.5,-0.5)
            -- cycle;
 
\draw[
        draw=orange, draw opacity=0.4, dotted
    ] (2,-0.5,-.5)
    -- (0,-1.5,-1.5)
    -- (-2,-0.5,-0.5)
            -- (-2,0.5,0.5);

\draw[
        draw=orange, draw opacity=0.4,
    ]  (2,-0.5,-.5)
    -- (2,0.5,.5)
    -- (0,1.5,1.5)
            -- (-2,0.5,0.5);

\node (0) at (0,1,2) [vertex] {};
\node (1) at (0,2,1) [vertex] {};
\node (2) at (1,0,2) [vertex] {};
\node (3) at (1,2,0) [vertex] {};
\node (4) at (2,1,0) [vertex] {};
\node (5) at (2,0,1) [vertex] {};

\node (6) at (0,-1,2) [vertex] {};
\node (7) at (0,2,-1) [vertex] {};
\node (8) at (-1,0,2) [vertex] {};
\node (9) at (-1,2,0) [vertex] {};
\node (10) at (2,-1,0) [vertex] {};
\node (11) at (2,0,-1) [vertex] {};

\node (12) at (0,1,-2) [vertex] {};
\node (13) at (0,-2,1) [vertex] {};
\node (14) at (1,0,-2) [vertex] {};
\node (15) at (1,-2,0) [vertex] {};
\node (16) at (-2,1,0) [vertex] {};
\node (17) at (-2,0,1) [vertex] {};
\node (18) at (0,-1,-2) [vertex] {};
\node (19) at (0,-2,-1) [vertex] {};
\node (20) at (-1,0,-2) [vertex] {};
\node (21) at (-1,-2,0) [vertex] {};
\node (22) at (-2,-1,0) [vertex] {};
\node (23) at (-2,0,-1) [vertex] {};

\draw (1) -- (3); 
\draw (1) -- (9);
\draw (7) -- (3); 
\draw (7) -- (9);

\draw (0) -- (2); 
\draw (8) -- (6);
\draw (0) -- (8); 
\draw (6) -- (2);
 
\draw (4) -- (5); 
\draw (10) -- (11);
\draw (4) -- (11); 
\draw (5) -- (10);

\draw[dotted] (12) -- (14); 
\draw[dotted] (18) -- (20);
\draw[dotted] (12) -- (20); 
\draw[dotted] (18) -- (14);

\draw (13) -- (15); 
\draw[dotted] (21) -- (19);
\draw (13) -- (21); 
\draw[dotted] (15) -- (19);

\draw (16) -- (17); 
\draw[dotted] (22) -- (23);
\draw[dotted] (16) -- (23); 
\draw (17) -- (22);

\draw (0) -- (1);
\draw (6) -- (13); 
\draw[dotted] (7) -- (12);
\draw[dotted] (18) -- (19); 

\draw (2) -- (5);
\draw (8) -- (17); 
\draw[dotted] (14) -- (11);
\draw[dotted] (20) -- (23); 

\draw (3) -- (4);
\draw (9) -- (16); 
\draw (15) -- (10);
\draw (21) -- (22);

\end{tikzpicture}
\end{center}
\caption{Example of a mirror graph: cubic permutahedron with corresponding hyperplane arrangement}
\label{fig:permut}
\end{figure}

For an example consider the following classical problem. For a hyperplane $H_i$ with an orthogonal vector $v_i$ its \emph{reflection} is the map $\sigma_{H_i}(x)=x-2\frac{x\cdot v_i}{v_i\cdot v_i}v_i$. Arrangements of hyperplanes $\{H_1,\ldots,H_m\}$ in $\R^n$ such that all hyperplanes include vector 0, and for every $i\in \{1,\ldots,m\}$ the reflection of $H_i$ permutes the hyperplanes $\{H_1,\ldots,H_m\}$ are called \emph{reflection arrangements}. A simple example is a collection of $m$ vectors in a plane such that the angle between $v_i$ and a chosen axis is $\frac{i}{\pi}$, while for a more complicated example see Figure \ref{fig:permut}. 

A \emph{Coxeter group} is a group which can be presented by generators and relations as $\langle \alpha_1,\ldots,\alpha_m \mid (\alpha_i\alpha_j)^{k_{ij}}=1 \textrm{ for all } 1\leq i,j\leq m \rangle$, where $k_{ii}=1$ and $k_{ij}\geq 2$ for all $0\leq i<j\leq m$.
By a classical result \cite[Theorem 2.3.7]{MR1744046}, the reflection arrangements are in one to one correspondence with the finite Coxeter groups since the tope graphs of the reflection arrangements are the Cayley graphs of the finite Coxeter groups and vice versa. Moreover the finite Coxeter groups were classified by Coxeter \cite{coxeter1935complete}. They give rise to four infinite families and six exceptional cases of irreducible reflection arrangements. Here irreducible means that there is no non-trivial partition of hypercubes in two mutually orthogonal classes; equivalently, their tope graphs are not the Cartesian product of smaller tope graphs.

In this paper we characterize mirror graphs as precisely the Cayley graphs of the finite Coxeter groups and thus the tope graphs of reflection arrangements (see Theorem \ref{thm:main}). This not just fully classifies mirror graphs, but also gives a graph theoretical characterization of  reflection arrangements, a subproblem of a more general problem described above, and a characterization of the Cayley graphs of the finite Coxeter groups. Moreover, we provide a polynomial algorithm for the recognition of the three coinciding classes.
The rest of the paper is organized as follows. First we give some basic results, definitions and notations needed throughout the paper, while in the next section we prove the main result asserted in Theorem \ref{thm:main}.

First a few simple results about partial cubes. Relation $\Theta$  \cite{djokovic1973distance,winkler1984isometric} as defined above is an equivalence relation in partial cubes -- we will write $F_{uv}$ for the set of all edges that are in relation $\Theta$ with $uv$.
For an edge $uv$ in $G$ define $W_{uv}$ as the subset of vertices of $G$ that are closer to vertex $u$ than to $v$, that is $W_{uv}= \{ w: \   d(u,w) < d(v, w) \}  $. Notice that in a mirror graph $G$ sets $W_{uv},W_{vu}$ coincide with sets of vertices of $G_i^1$ and $G_i^2$, where $i$ is the index of $E_i$ for which $E_i=F_{uv}$. For the sake of consistency we will prefer the partial cubes notation over the mirror graphs notation. We will write $U_{uv}$ for the subset of vertices in $W_{uv}$ which have a neighbor in $W_{vu}$.  A path $P$ in a partial cube $G$ is a shortest path if and only if it has all of its edges in pairwise different $\Theta$-classes. For fixed $u,v$, all shortest $u,v$-paths pass the same $\Theta$-classes of $G$. If $C$ is a closed walk, then $C$ passes each $\Theta$-class an even number of times. For more information on the relation $\Theta$, we refer the reader to \cite[Chapter 11]{Hammack:2011a}.


We say that a subgraph $H$ of $G$ is \emph{convex} if all the shortest paths in $G$ between vertices in $H$ lie in $H$. 
In \cite{jaz} a \emph{convex traverse} was introduced as follows:
Let $v_1u_1\Theta v_2u_2$ in a partial cube $G$, with $v_2 \in U_{v_1u_1}$. Let $C_1,\ldots, C_n$, $n\geq 1$, be a sequence of convex cycles such that $v_1u_1$ lies only on $C_1$, $v_2u_2$ lies only on $C_n$, and each pair $C_i$ and $C_{i+1}$, for $i\in \{1,\ldots,n-1\}$, intersects in exactly one edge and this edge is in $F_{v_1u_1}$, all the other pairs do not intersect. If the shortest path from $v_1$ to $v_2$ on the union of $C_1,\ldots, C_n$ is a shortest $v_1, v_2$-path in $G$, then we call $C_1,\ldots, C_n$ a convex traverse from $v_1u_1$ to $v_2u_2$. In this case the shortest path from $u_1$ to $u_2$ on the union of $C_1,\ldots, C_n$ is also a shortest path and we call this two paths the \emph{sides} of a traverse. Most importantly, it was proved in \cite{jaz} that for arbitrary edges $v_1u_1, v_2u_2$ with $v_1u_1\Theta v_2u_2$ there exists a convex traverse connecting them.

 One of the concepts closely connected to oriented matroids is \emph{antipodality}. Call a graph $G$ \emph{even} if every vertex $v$ of $G$  has a unique \emph{antipodal} vertex $\bar{v}$ at the distance $\textrm{diam}(G)$ from $v$. Moreover, if for every adjacent vertices $u,v$, also antipodes $\bar{u},\bar{v}$ are adjacent, then call $G$ \emph{harmonic-even}. It follows from \cite{MR1210100} and \cite[Proposition 3.1]{klavzar2009even} that $G$ is harmonic-even partial cube if and only if every vertex $v$ has a vertex $\bar{v}$ at the distance $i(G)$ from $v$, where $i(G)$ denotes the \emph{isometric dimension} of $G$, i.e.~the number of $\Theta$-classes in $G$. 

We will consider the right actions of groups on the vertices of graphs and for an element $\alpha$ of a group $A$ acting on a graph $G$ we will denote by $v^{\alpha}$ the image of a vertex $v$ in $G$ by the action of $\alpha$. We will denote the Cayley graph of a group $A$, with generators $S=\{\alpha_1,\alpha_2,\ldots, \alpha_k\}$ such that $S^{-1}=S$ and $1 \notin S$, with $\textrm{Cay}(A,S)$, and interpret it as the graph with vertex set $A$ and two elements $\beta_1,\beta_2\in A$ adjacent if and only if $\beta_1=\alpha_i\beta_2$ for some $i\in \{1,\ldots,k\}$.

\section{Results}

We start by exposing a crucial property of mirror graph from which many properties will follow.

\begin{lem}\label{lem:miror_to_acycloid}
A mirror graph $G$ is harmonic-even.
\end{lem}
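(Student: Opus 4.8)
The plan is to reduce the statement to the metric criterion recalled just above: by the cited equivalence (from \cite{MR1210100} and \cite[Proposition 3.1]{klavzar2009even}), a partial cube is harmonic-even precisely when every vertex $v$ admits a vertex at distance $i(G)$, the number of $\Theta$-classes. Since in a partial cube the distance between two vertices equals the number of $\Theta$-classes separating them, a vertex at distance $i(G)$ from $v$ is exactly one that is separated from $v$ by every $\Theta$-class. Thus it suffices to show that every vertex $v$ of a mirror graph $G$ has eccentricity exactly $i(G)$; equivalently, that no vertex has eccentricity strictly smaller.

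First I would argue by contradiction: fix $v$, let $w$ be a vertex at maximal distance $d = e(v)$ from $v$, and suppose $d < i(G)$. Then some $\Theta$-class $E_i = F_{ab}$ does not separate $v$ and $w$, so both lie in the same halfspace, say $v,w \in G_i^1$. I then apply the mirror automorphism $\alpha_i$ to $w$; since $\alpha_i$ maps $G_i^1$ isomorphically onto $G_i^2$, the vertex $\alpha_i(w)$ lies in $G_i^2$ and is therefore separated from $v$ by $E_i$.

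The key step is to show $d(v,\alpha_i(w)) \ge d+1$, which contradicts the maximality of $d$. Here I would use that halfspaces of a partial cube are convex, so every geodesic from $v$ to a vertex $y \in G_i^2$ crosses $E_i$ exactly once; hence
\[
 d(v,y) \;=\; \min_{\,ab\in E_i,\ a\in G_i^1}\bigl(d(v,a)+1+d(b,y)\bigr).
\]
Applied to $y=\alpha_i(w)$, the crucial simplification comes from property (i) of a mirror partition: for a crossing edge $ab \in E_i$ we have $\alpha_i(a)=b$, hence $\alpha_i^{-1}(b)=a$, and since $\alpha_i$ is an isometry, $d(b,\alpha_i(w)) = d(\alpha_i^{-1}(b), w) = d(a,w)$. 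Substituting gives $d(v,\alpha_i(w)) = \min_{ab}\bigl(d(v,a)+1+d(a,w)\bigr) \ge d(v,w)+1 = d+1$ by the triangle inequality, the desired contradiction.

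I expect this last computation to be the main obstacle: a naive reflection of $w$ is not enough, because $\alpha_i$ in general permutes the other $\Theta$-classes as well, so it does not simply ``flip coordinate $i$''. The trick that makes the argument go through is to avoid tracking the global action of $\alpha_i$ and instead to combine the one-crossing decomposition of geodesics across the convex halfspaces with the local edge-swapping property (i), which is exactly what converts the unknown distance $d(b,\alpha_i(w))$ into the controllable $d(a,w)$. Since the argument is uniform in $v$ (and could alternatively be propagated by the vertex-transitivity of mirror graphs), it shows that every vertex has eccentricity $i(G)$, and the cited criterion then yields that $G$ is harmonic-even.
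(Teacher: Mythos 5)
Your proof is correct and is essentially the same argument as the paper's: both assume a farthest vertex is not separated from $v$ by some $\Theta$-class, reflect one endpoint of the pair across that class with the mirror automorphism, and show the distance strictly increases, contradicting maximality. The only (cosmetic, slightly favorable) differences are that you reflect the far vertex $w$ rather than $v$ and phrase the key inequality via the geodesic crossing-edge decomposition and the isometry property instead of explicit path surgery, which lets you contradict the eccentricity of $v$ directly without invoking vertex-transitivity and the diameter as the paper does.
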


\begin{proof}
As mentioned in the preliminaries, it is enough to prove that every vertex $v\in G$ has a vertex $v'$ at the distance $i(G)$ from $v$, where $i(G)$ is the isometric dimension of $G$. 
Choose an arbitrary $v\in V(G)$ and let $u$ be a vertex that is at the maximal distance from $v$. By vertex transitivity, $d(v,u)=\diam(G)$. For the sake of contradiction assume that $d(v,u)<i(G)$, thus there exists a $\Theta$-class, say $F_{ab}$, such that $v,u \in W_{ab}$. Let $\alpha_{ab}$ be a mirror automorphism of $G$ that maps $W_{ab}$ to $W_{ba}$ with mapping every element of $U_{ab}$ to its neighbor.

Let $P$ be a shortest $v^{\alpha_{ab}},u$-path.  
Since $F_{ab}$ is a cut and $v^{\alpha_{ab}}\in W_{ba}$, $u\in W_{ab}$, there exists an edge $a'b'$ on $P$ that is in relation $\Theta$ with $ab$, where $a' \in U_{ab}$. The automorphism $\alpha_{ab}^{-1}$ maps the subpath of $P$ connecting $v^{\alpha_{ab}}$ and $b'$ to a path connecting $v$ and $a'$. The union of this path and the subpath of $P$ connecting $a'$ and $u$ is a $v,u$-path 
of length $d(u,v^{\alpha_{ab}})-1$. Thus  $d(v,u)\leq d(v^{\alpha_{ab}},u)-1\leq \diam(G)-1$. A contradiction.
\end{proof}

\begin{lem}\label{lem:inter}
Let $G$ be a harmonic-even partial cube, and $F_{ab}, F_{cd}$ its arbitrary $\Theta$-classes. Then there exists a convex cycle in $G$ that includes edges from $F_{ab}$ and $F_{cd}$.
\end{lem}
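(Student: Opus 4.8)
The plan is to prove that any two $\Theta$-classes share a common convex cycle, leveraging the harmonic-even structure and the convex traverse tool from the preliminaries.

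The plan is to combine the antipodal symmetry supplied by harmonic-evenness with the existence of convex traverses, and the key reduction is the following. Suppose I can produce a convex traverse $C_1,\dots,C_n$ of the class $F_{cd}$ whose \emph{side} (a shortest path) crosses $F_{ab}$; then I am done. Indeed, every cycle $C_i$ of such a traverse is by definition convex and crosses $F_{cd}$: consecutive cycles share an $F_{cd}$-edge, and $C_1,C_n$ carry the end-edges $v_1u_1,v_2u_2\in F_{cd}$, so each $C_i$ contains an edge of $F_{cd}$. The side lies on $\bigcup_i C_i$, so if it contains an edge $f\in F_{ab}$, then $f$ lies on some $C_i$, and that $C_i$ is a convex cycle meeting both $F_{ab}$ and $F_{cd}$, exactly as required.

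So the task reduces to finding two edges of $F_{cd}$ lying on opposite sides of the cut $F_{ab}$: a convex traverse between them will then have a side running from $W_{ab}$ to $W_{ba}$ and hence crossing $F_{ab}$. This is where harmonic-evenness enters. I would invoke the antipodal automorphism $\sigma\colon v\mapsto\bar v$: since $\bar v$ lies at distance $i(G)$ from $v$, a shortest $v\bar v$-path meets every $\Theta$-class exactly once, so $v$ and $\bar v$ sit on opposite sides of each class; consequently $\sigma$ preserves every $\Theta$-class while interchanging its two halfspaces. Now pick any edge $e\in F_{cd}$; it lies in one of $W_{ab},W_{ba}$, and $\sigma(e)$ is an edge of $F_{cd}$ in the other halfspace. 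This yields the two required edges of $F_{cd}$ straddling $F_{ab}$.

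It then remains to orient these edges consistently and apply the convex-traverse existence result from \cite{jaz}: writing $e=p_1q_1$ and $\sigma(e)=p_2q_2$ with $p_1,p_2\in W_{cd}$, one checks $p_2\in U_{p_1q_1}$ (its neighbor $q_2$ lies across $F_{cd}$), so a convex traverse from $p_1q_1$ to $p_2q_2$ exists, and its side is a shortest $p_1p_2$-path. Since $p_1\in W_{ab}$ while $p_2\in W_{ba}$, this path must use an edge of $F_{ab}$, and the reduction above finishes the proof. The step I expect to need the most care is the bookkeeping of sides and halfspaces — confirming the chosen orientation meets the hypothesis of the traverse result, and that the side guaranteed to cross $F_{ab}$ is one of the two traverse sides — but once $\sigma$ is used to place $F_{cd}$-edges on both sides of $F_{ab}$, no genuine obstacle remains.
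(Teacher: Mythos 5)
Your proposal is correct and follows essentially the same route as the paper: both take an edge of one $\Theta$-class together with its antipodal image (an edge of the same class, by harmonic-evenness), run a convex traverse between them, and observe that some cycle of the traverse must pick up the other class while every cycle of the traverse carries the first. The only cosmetic difference is that you argue the traverse's side crosses $F_{ab}$ because its endpoints lie in opposite halfspaces, whereas the paper notes the side has length $i(G)$ and hence meets every class; both follow from the same antipodality fact.
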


\begin{proof}
Let $vu$ be an arbitrary edge in $F_{ab}$ and $\bar{v},\bar{u}$ the antipodal vertices of $v,u$, respectively. Since $G$ is harmonic-even, $\bar{u}$ is adjacent to $\bar{v}$. Moreover, if $u\in W_{ba}$ and $v\in W_{ab}$, then $\bar{u} \in W_{ab}$ and $\bar{v} \in W_{ba}$ since the antipode of a vertex in a harmonic-even partial cube is at the distance $i(G)$ from it. Hence $\bar{u}\bar{v} \in F_{ab}$. Let $T$ be a convex traverse from $vu$ to $\bar{u}\bar{v}$. Since $d(v,\bar{v})=i(G)$,  all the $\Theta$-classes of $G$ cross $T$, thus there exists a convex cycle on $T$ that includes edges from $F_{cd}$. By definition of a traverse, the cycle also includes edges from $F_{ab}$.
\end{proof}


For the next lemma notice the following. Let $\alpha$ be an automorphism of a partial cube $G$. Relation $\Theta$ is defined based on distances in a graph, thus automorphisms of $G$ map $\Theta$-classes to $\Theta$-classes.
Assume that we know for each $\Theta$-class  of $G$ onto which $\Theta$-class it is mapped by $\alpha$, and additionally an image $v^{\alpha}$ of some vertex $v$ of $G$. Let $u$ be an arbitrary vertex of $G$ and $P$ a $v,u$-path. The knowledge of $\alpha$ determines where path $P$ is mapped by $\alpha$, since the beginning is mapped to $v^{\alpha}$ and each vertex is incident with at most one edge of each $\Theta$-class. In particular, the image of $u$ is fixed. Hence, the assumed knowledge completely determines $\alpha$.

\begin{lem}\label{lem:id}
Let $G$ be a harmonic-even partial cube and $F_{ab}$ a $\Theta$-class  in $G$. If $\alpha$ is an automorphism such that for each edge $vu \in F_{ab}$ it holds that $v^{\alpha}=v$ and $u^{\alpha}=u$, then $\alpha$ is the identity map.
\end{lem}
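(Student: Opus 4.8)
The plan is to use the determination principle stated in the paragraph preceding the lemma: an automorphism of a partial cube is completely determined by the permutation it induces on the $\Theta$-classes together with the image of a single vertex. Since $\alpha$ fixes the (nonempty) set of vertices incident to $F_{ab}$, it fixes a vertex, so it suffices to prove that $\alpha$ fixes every $\Theta$-class setwise; then $\alpha$ agrees with the identity on both pieces of data and hence $\alpha=\mathrm{id}$. Moreover, to show $\alpha(F_{cd})=F_{cd}$ for an arbitrary class $F_{cd}$ it is enough to exhibit a single edge $e\in F_{cd}$ with $\alpha(e)\in F_{cd}$, because automorphisms map $\Theta$-classes to $\Theta$-classes and $F_{\alpha(e)}=\alpha(F_e)$.

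Fix a class $F_{cd}$. By Lemma~\ref{lem:inter} there is a convex cycle $C$ carrying edges of both $F_{ab}$ and $F_{cd}$. Write $C=v_0v_1\cdots v_{2k-1}v_0$ and arrange that $v_0v_1\in F_{ab}$; its antipodal edge on $C$ is then $v_kv_{k+1}$, which also lies in $F_{ab}$. By hypothesis all four vertices $v_0,v_1,v_k,v_{k+1}$ are fixed by $\alpha$; in particular the antipodal pair $v_0,v_k$ of $C$ (at distance $k$ on the $2k$-cycle) is fixed.

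The key step is the observation that a convex cycle is recovered from any antipodal pair of its vertices. Indeed, convexity makes $C$ isometric, so $d_G(v_0,v_k)=k$, and convexity further forces every shortest $v_0,v_k$-path of $G$ to lie on $C$; since the only $v_0,v_k$-paths of length $k$ inside the cycle $C$ are its two halves, these are exactly the shortest $v_0,v_k$-paths of $G$, whence $C$ (on both its vertices and edges) equals the interval $I_G(v_0,v_k)$ of all vertices lying on shortest $v_0,v_k$-paths. Applying this to $\alpha(C)$, which is again a convex cycle because automorphisms preserve distances, and whose antipodal pair is $\{\alpha(v_0),\alpha(v_k)\}=\{v_0,v_k\}$, we obtain $\alpha(C)=I_G(v_0,v_k)=C$. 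Thus $\alpha$ restricts to an automorphism of the cycle $C$ fixing the two adjacent vertices $v_0,v_1$; the only such cycle automorphism is the identity, so $\alpha$ fixes $C$ pointwise. In particular $\alpha$ fixes the $F_{cd}$-edges of $C$, giving $\alpha(F_{cd})=F_{cd}$, as required.

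I expect the main obstacle to be the key step, namely proving $C=I_G(v_0,v_k)$ and hence $\alpha(C)=C$, since this is exactly where convexity must be exploited carefully (both to make $C$ isometric and to pin down all shortest paths between the antipodal vertices). Once $\alpha(C)=C$ is in hand, the reduction through the determination principle together with the rigidity of cycle automorphisms is routine.
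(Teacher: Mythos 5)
Your proof is correct and follows essentially the same route as the paper's: reduce via the determination principle to showing every $\Theta$-class is fixed setwise, obtain a convex cycle through $F_{ab}$ and $F_{cd}$ from Lemma~\ref{lem:inter}, and use the fixed antipodal $F_{ab}$-edges to conclude the cycle is fixed pointwise. Your interval argument $C=I_G(v_0,v_k)$ is simply a careful justification of the step the paper states tersely as ``since $C$ is convex, all the vertices on $C$ must be mapped to itself.''
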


\begin{proof}
Let $\alpha$ be an automorphism of $G$ as described in the assertion. By the notice before the lemma and the fact that $v^{\alpha}=v$ for an arbitrary $v\in U_{ab}$, it suffice to show that every $\Theta$-class of $G$ is mapped to itself by $\alpha$. By definition, this holds for $F_{ab}$. On the other hand, if $F_{cd}$ is any $\Theta$-class in $G$, different from $F_{ab}$, let $C$ be a convex cycle that includes edges from $F_{ab}$ and  $F_{cd}$, provided by Lemma \ref{lem:inter}. Since $C$ is convex, it has two antipodal edges in $F_{ab}$. All four endpoints of the edges are mapped to itself by $\alpha$. Since $C$ is convex, this implies that all the vertices on $C$ must be mapped to itself. In particular, edges from $F_{cd}$ that lie on $C$ are mapped to itself, thus also $F_{cd}$ is mapped to itself.
\end{proof}

\begin{cor}\label{cor:mirror}
For a harmonic-even partial cube $G$ and a $\Theta$-class $F_{ab}$ in $G$, there exists at most one automorphism $\alpha_{ab}$ of $G$ such that for each $uv \in F_{ab}$ it holds $v^{\alpha_{ab}}=u$ and $u^{\alpha_{ab}}=v$. Moreover, $\alpha_{ab}^2=1$.
\end{cor}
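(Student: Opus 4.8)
The plan is to reduce both assertions directly to Lemma~\ref{lem:id}, which guarantees that any automorphism of a harmonic-even partial cube fixing every endpoint of every edge in $F_{ab}$ must be the identity. The whole strategy is to manufacture, out of the hypothesized automorphism(s), a new automorphism that fixes all endpoints of the edges in $F_{ab}$, and then invoke that lemma.

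For the uniqueness part, I would suppose that $\alpha$ and $\beta$ are two automorphisms, each satisfying $v^{\alpha}=u$, $u^{\alpha}=v$ and $v^{\beta}=u$, $u^{\beta}=v$ for every edge $uv\in F_{ab}$. First I would observe that $\beta^{-1}$ then also interchanges the two endpoints of each such edge, i.e.\ $u^{\beta^{-1}}=v$ and $v^{\beta^{-1}}=u$. Working with the right-action convention $x^{\gamma\delta}=(x^{\gamma})^{\delta}$, I would compute for the composition $\gamma=\alpha\beta^{-1}$ that $u^{\gamma}=(u^{\alpha})^{\beta^{-1}}=v^{\beta^{-1}}=u$, and symmetrically $v^{\gamma}=v$. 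Since this holds for every edge $uv\in F_{ab}$, the automorphism $\gamma$ fixes every endpoint of every edge of $F_{ab}$, so Lemma~\ref{lem:id} forces $\gamma=1$, hence $\alpha=\beta$. This establishes that there is at most one such $\alpha_{ab}$.

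For the involution claim, I would argue analogously but with a single automorphism. Taking $\alpha=\alpha_{ab}$ and any edge $uv\in F_{ab}$, the square satisfies $u^{\alpha^{2}}=(u^{\alpha})^{\alpha}=v^{\alpha}=u$ and likewise $v^{\alpha^{2}}=v$. Thus $\alpha^{2}$ fixes every endpoint of every edge in $F_{ab}$, and Lemma~\ref{lem:id} again yields $\alpha^{2}=1$, so $\alpha_{ab}$ is an involution.

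I do not expect any serious obstacle here, since Lemma~\ref{lem:id} carries all the structural weight. The only points requiring care are bookkeeping ones: correctly applying the right-action convention when composing $\alpha$ and $\beta^{-1}$, and noting at the outset that the hypothesis on $\alpha$ (and hence on $\beta^{-1}$) is precisely the statement that endpoints of every edge in $F_{ab}$ are interchanged, so that the relevant compositions land back on the \emph{same} endpoints rather than merely on the same $\Theta$-class.
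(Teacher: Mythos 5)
Your proposal is correct and follows essentially the same route as the paper: both arguments manufacture a composition that fixes every endpoint of every edge in $F_{ab}$ and then invoke Lemma~\ref{lem:id} to conclude it is the identity. The only cosmetic difference is that the paper composes $\alpha_{ab}\alpha'$ directly (exploiting that $\alpha'$ swaps endpoints) where you use $\alpha\beta^{-1}$; the logic is identical.
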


\begin{proof}
Let $\alpha_{ab}$ be an automorphism from the assertion. For the  automorphism $\alpha_{ab}^2$ it holds that $v^{\alpha_{ab}^2}=v$, $u^{\alpha_{ab}^2}=u$ for every $vu\in F_{ab}$. Thus $\alpha_{ab}^2=1$, by Lemma \ref{lem:id}. If $\alpha'$ is an arbitrary automorphism of $G$ that maps every element of $U_{ab}$ to its neighbor in $U_{ba}$ and vice versa, then also for $\alpha_{ab}\alpha'$ holds that $v^{\alpha_{ab}\alpha'}=v$ and $u^{\alpha_{ab}\alpha'}=u$ for all $vu\in F_{ab}$. This implies $\alpha'=\alpha_{ab}^{-1}=\alpha_{ab}$.
\end{proof}


If $G$ is a mirror graph and $xy$ an edge in $G$, we can, due to Lemma \ref{lem:miror_to_acycloid} and Corollary \ref{cor:mirror}, denote with $\alpha_{xy}$ the unique \emph{mirror automorphism} of $G$: the automorphism that maps each vertex in $U_{xy}$ to its neighbor in $U_{yx}$ and vice versa. The uniqueness of it leads to  the following polynomial algorithm that for a graph $G$ with $n$ vertices and $m$ edges decides if $G$ is a mirror graph, and in the positive case outputs its mirror partition and mirror automorphisms:

\begin{algorithm}[ht]
\caption{Recognition of mirror graphs}
\begin{enumerate}
\item First check if $G$ is a partial cube by calculating the $\Theta$-classes and obtaining its embedding in a hypercube. This can be done in $O(n^2)$ by \cite{eppstein2011recognizing}. The $\Theta$-classes are candidates for the mirror partition of $G$. If $G$ is not a partial cube, it is not a mirror graph.
\item For each $\Theta$-class $F_{ab}$, its corresponding mirror automorphism $\alpha_{ab}$, if existent, must map all the convex cycles crossed by $F_{ab}$ to themselves. By Lemma \ref{lem:inter} this determines 
the image of each $\Theta$-class, and thus gives a candidate for the mirror automorphism. Convex cycles of $G$ can be found in $O(mn^2)$ by \cite{eppstein2011recognizing}, obtaining at most $O(nm)$ of them by \cite{azarija2015moore}. Iterating through convex cycles we can determine for each $\Theta$-class how its corresponding mirror automorphism permutes the other $\Theta$-classes. 
\item Considering $G$ embedded in a hypercube, each permutation of $\Theta$-classes can be seen as a permutation of coordinates of the hypercube that $G$ is embedded into, and thus it can easily be checked if the candidates for the mirror automorphisms in fact define automorphisms of $G$. If so, we output the $\Theta$-classes and the corresponding mirror automorphisms.
\end{enumerate}
\label{alg:recognition}
\end{algorithm}

\begin{lem}\label{lem:path}
Let $G$ be a mirror graph, $\alpha_{xy}$ an arbitrary mirror automorphism, and let $v'=v^{\alpha_{xy}}$ for a chosen $v\in V(G)$. Then there exists a path $P=vv_1v_2\ldots v_{n-1}v'$ from $v$ to $v'$, such that $\alpha_{xy}=\alpha_{vv_1}\alpha_{v_1v_2}\alpha_{v_2v_3}\ldots \alpha_{v_{n-1}v'}$.
\end{lem}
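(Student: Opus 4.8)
The plan is to exhibit a particular shortest $v,v'$-path that is \emph{symmetric} under the reflection $\alpha_{xy}$, and then to show by a direct telescoping computation that the product of the edge-mirror-automorphisms along it collapses to $\alpha_{xy}$. The only real input I need beyond the construction is a conjugation rule for mirror automorphisms: for any automorphism $\beta$ of $G$ and any edge $e=pq$, the mirror automorphism of the image edge $e^{\beta}=p^{\beta}q^{\beta}$ equals $\beta^{-1}\alpha_e\beta$. Indeed $\beta^{-1}\alpha_e\beta$ is an automorphism sending $p^{\beta}\mapsto q^{\beta}$ and $q^{\beta}\mapsto p^{\beta}$ (a one-line check in the right-action convention), so it coincides with $\alpha_{e^{\beta}}$ by the uniqueness part of Corollary~\ref{cor:mirror}. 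I will also repeatedly use that every mirror automorphism is an involution, again by Corollary~\ref{cor:mirror}.

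Next I construct the path. Assuming $v\in W_{xy}$, set $k=d(v,U_{xy})$ and choose $a_0\in U_{xy}$ with $d(v,a_0)=k$ together with a geodesic $P_1\colon v=u_0,u_1,\ldots,u_k=a_0$. Let $b_0=a_0^{\alpha_{xy}}\in U_{yx}$ be the neighbour of $a_0$ across the class, so that $a_0b_0\in F_{xy}$, and let $P_2$ be the $\alpha_{xy}$-image of the reversed $P_1$, a geodesic from $b_0=u_k^{\alpha_{xy}}$ to $v'=u_0^{\alpha_{xy}}$. Concatenating $P_1$, the edge $a_0b_0$, and $P_2$ yields a walk of length $2k+1$. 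To see this walk is in fact a shortest path, note that any geodesic from $v$ to $v'$ crosses $F_{xy}$ at a single edge $a'b'$ with $a'\in U_{xy}$, $b'\in U_{yx}$; since $\alpha_{xy}$ is an isometry and $b'^{\alpha_{xy}}=a'$, $v'^{\alpha_{xy}}=v$, we get $d(b',v')=d(a',v)\ge k$, whence $d(v,v')\ge 2k+1$. The displayed walk gives the matching upper bound, so $d(v,v')=2k+1$ and the walk is a genuine shortest path $P$ with no repeated vertices.

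Finally I compute the product. Writing $s_i=\alpha_{u_{i-1}u_i}$ for the edges of $P_1$, the edges of $P_2$ are precisely the $\alpha_{xy}$-images of the edges of $P_1$, traversed in reverse, so by the conjugation rule their mirror automorphisms are $\alpha_{xy}s_k\alpha_{xy},\ldots,\alpha_{xy}s_1\alpha_{xy}$, while the middle edge $a_0b_0$ contributes $\alpha_{xy}$. Hence
\[
\gamma_P=s_1\cdots s_k\,\cdot\,\alpha_{xy}\,\cdot\,(\alpha_{xy}s_k\alpha_{xy})(\alpha_{xy}s_{k-1}\alpha_{xy})\cdots(\alpha_{xy}s_1\alpha_{xy}).
\]
Using $\alpha_{xy}^2=1$, the second block telescopes to $\alpha_{xy}(s_k\cdots s_1)\alpha_{xy}$, giving $\gamma_P=(s_1\cdots s_k)(s_k\cdots s_1)\,\alpha_{xy}$; cancelling the involutions $s_is_i=1$ from the middle outward leaves $\gamma_P=\alpha_{xy}$, as desired.

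The computation itself is routine once set up; the conceptual step I expect to be the crux is recognizing that a \emph{reflection-symmetric} geodesic is the right object, so that the two halves pair up and cancel. The secondary care points are the conjugation rule $\alpha_{e^{\beta}}=\beta^{-1}\alpha_e\beta$ and the bookkeeping of the product order under the right action. (In fact the algebraic cancellation uses only the symmetry and the involution property, not shortestness; shortestness is invoked solely to guarantee that the symmetric walk is an honest path.)
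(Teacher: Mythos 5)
Your proof is correct and is in substance the same as the paper's: the paper's induction on the distance from $v$ to $F_{xy}$ unrolls to exactly your reflection-symmetric geodesic, and its key identity $\alpha_{vu}\alpha_{xy}\alpha_{u'v'}\alpha_{xy}=1$ is precisely your conjugation rule $\alpha_{e^{\alpha_{xy}}}=\alpha_{xy}\alpha_{e}\alpha_{xy}$, both ultimately resting on Lemma~\ref{lem:id} via Corollary~\ref{cor:mirror}. The one place you go beyond the paper is the shortest-path count showing the symmetric walk of length $2k+1$ has no repeated vertices; the paper leaves this implicit.
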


\begin{proof}
We will prove the lemma by induction on the distance from $v$ to $F_{xy}$, i.e.~the distance from $v$ to the closest edge that is in $F_{xy}$. It clearly holds if $v$ is incident with $F_{xy}$, i.e.~the distance is 0. Assume that the distance from $v$ to $F_{xy}$ is $d>0$, and that the lemma holds for all the vertices at the distance less than $d$. Take an arbitrary shortest path connecting $v$ and $F_{xy}$ and let $u$ be the neighbor of $v$ on it. Let $u'=u^{\alpha_{xy}}$. Notice that $\alpha_{xy}$ maps $F_{vu}$ to $F_{v'u'}$, by Corollary \ref{cor:mirror} it also maps $F_{v'u'}$ to $F_{vu}$. By the induction assumption, there exists a path $P'=uu_1u_2\ldots u_{n-1}u'$ from $u$ to $u'$, such that $\alpha_{xy}=\alpha_{uu_1}\alpha_{u_1u_2}\alpha_{u_2u_3}\ldots \alpha_{u_{n-1}u'}$. Therefore, to prove the lemma, it is enough to prove that $\alpha_{xy}=\alpha_{vu}\alpha_{xy}\alpha_{u'v'}$.

We will prove that $\alpha_{vu}\alpha_{xy}\alpha_{u'v'}\alpha_{xy}=1$. Consider  the image of $F_{vu}$ by $\alpha_{vu}\alpha_{xy}\alpha_{u'v'}\alpha_{xy}$. Automorhism $\alpha_{vu}$ maps $F_{vu}$ to itself, $\alpha_{xy}$ maps $F_{vu}$ to $F_{v'u'}$, by the above notice, $\alpha_{u'v'}$ maps $F_{v'u'}$ to itself, while $\alpha_{xy}$ maps $F_{v'u'}$ to $F_{uv}$. Thus $\alpha_{vu}\alpha_{xy}\alpha_{u'v'}\alpha_{xy}$ maps $F_{uv}$ to itself. Now take an arbitrary $w \in U_{uv}$, let $z$ be its neighbour in $U_{vu}$, and denote with $w'=w^{\alpha_{xy}}, z'=z^{\alpha_{xy}}$. Notice that $w'z'\in F_{v'u'}$. Then $\alpha_{vu}$ maps $w$ to $z$, $\alpha_{xy}$ maps $z$ to $z'$, $\alpha_{u'v'}$ maps $z'$ to $w'$, while $\alpha_{xy}$ maps $w'$ to $w$. By Lemma \ref{lem:id} and the fact that $w$ was an arbitrary element of $ U_{uv}$, it follows that $\alpha_{vu}\alpha_{xy}\alpha_{u'v'}\alpha_{xy}=1$. This finishes the proof.
\end{proof}

\begin{lem}\label{lem:cycle}
Let $C=(v_0v_1\ldots v_{2i-1})$ be a convex cycle in a mirror graph $G$. For every $j\in \{0,1,\ldots, 2i-1\}$ it holds $$\alpha_{v_{j+1}v_j}=\alpha_{v_1v_0}(\alpha_{v_0v_{2i-1}}\alpha_{v_1v_0})^{j}.$$
Moreover
$$\alpha_{v_0v_1}\alpha_{v_1v_2}\cdots \alpha_{v_{2i-1}v_0}=\alpha_{v_0v_{2i-1}}\alpha_{v_{2i-1}v_{2i-2}}\cdots \alpha_{v_{1}v_0}=1.$$
\end{lem}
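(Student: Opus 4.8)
To lighten notation I would write $r_k:=\alpha_{v_kv_{k+1}}$, reading indices modulo $2i$, and set $\rho:=\alpha_{v_0v_{2i-1}}\alpha_{v_1v_0}=r_{2i-1}r_0$. Since $C$ is a convex (hence isometric) even cycle, its two antipodal edges $v_kv_{k+1}$ and $v_{k+i}v_{k+i+1}$ lie in the same $\Theta$-class, so by Corollary~\ref{cor:mirror} they induce the same mirror automorphism; that is, $r_k=r_{k+i}$. With this notation the two displayed equalities read $r_j=r_0\rho^{\,j}$ and $\rho^{\,i}=1$, and my plan is to first read off how every $r_k$ permutes the $2i$ vertices of $C$, recognize the resulting pattern as the dihedral action, and then promote these cycle-level identities to genuine identities of automorphisms of $G$.

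\emph{Step A (action on the cycle).} The $\Theta$-class $F_{v_kv_{k+1}}$ crosses $C$, so the corresponding mirror automorphism maps $C$ onto itself, as recorded in the discussion of Algorithm~\ref{alg:recognition}. Because $r_k$ interchanges the endpoints of every edge of its own class, it interchanges $v_k$ and $v_{k+1}$; the only automorphism of the $2i$-cycle $C$ that swaps these two adjacent vertices is the reflection $v_m\mapsto v_{2k+1-m}$. Hence $v_m^{\,r_k}=v_{2k+1-m}$ for all $m$, and in particular $v_m^{\,\rho}=v_{m+2}$, i.e.\ $\rho$ rotates $C$ by two steps.

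\emph{Step B (conjugation principle and the first identity).} The device that does the lifting is the observation that, for any automorphism $\psi$ of $G$ and any edge $ab$, one has $\psi^{-1}\alpha_{ab}\psi=\alpha_{a^\psi b^\psi}$: the left-hand side is an involution that swaps the two sides of the wall $F_{ab}^{\psi}=F_{a^\psi b^\psi}$ by sending each boundary vertex to its neighbour across it, so by the uniqueness part of Corollary~\ref{cor:mirror} it must be the mirror automorphism of $F_{a^\psi b^\psi}$. Applying this with $\psi=\rho$ and $ab=v_mv_{m+1}$, and using Step~A, gives the recursion $\rho^{-1}r_m\rho=r_{m+2}$; applying it with $\psi=r_0$ and $ab=v_{2i-1}v_0$ (where $v_{2i-1}^{\,r_0}=v_2$ and $v_0^{\,r_0}=v_1$) gives the base case $r_1=r_0r_{2i-1}r_0=r_0\rho$. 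Together with the elementary relation $\rho^{\,j}r_0=r_0\rho^{-j}$ --- immediate from $r_0^2=1$ and $\rho=r_{2i-1}r_0$ --- a routine induction on $j$ (splitting into the even and odd residues, with bases $r_0$ and $r_1=r_0\rho$) yields $r_j=r_0\rho^{\,j}$ for all $j$, which is the first assertion.

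\emph{Step C (the product identities and the obstacle).} From $r_j=r_0\rho^{\,j}$ and $\rho^{\,j}r_0=r_0\rho^{-j}$ one computes $r_mr_{m+1}=\rho$ for every $m$. Now antipodality enters: $r_i=r_0$, while Step~B also gives $r_i=r_0\rho^{\,i}$, so $\rho^{\,i}=1$. The forward product then telescopes, $\alpha_{v_0v_1}\cdots\alpha_{v_{2i-1}v_0}=(r_0r_1)(r_2r_3)\cdots(r_{2i-2}r_{2i-1})=\rho^{\,i}=1$, and the reverse product is literally its inverse (each $r_k$ is an involution), hence also $1$. The real obstacle throughout is not the dihedral bookkeeping, which is transparent at the level of the permutations induced on the $2i$ vertices of $C$, but the fact that two products of mirror automorphisms can agree on $C$ yet differ elsewhere on $G$; the conjugation principle of Step~B --- itself a consequence of the uniqueness of mirror automorphisms --- is exactly what upgrades the cycle computations into equalities of global automorphisms, and $\rho^{\,i}=1$ is the clean payoff of combining it with the antipodality $r_i=r_0$.
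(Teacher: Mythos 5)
Your proof is correct and follows essentially the same route as the paper: both rest on the facts that each mirror automorphism of a $\Theta$-class crossing $C$ restricts to a reflection of $C$, that the uniqueness statement of Corollary~\ref{cor:mirror} yields the conjugation identity $\psi^{-1}\alpha_{ab}\psi=\alpha_{a^{\psi}b^{\psi}}$ (the paper proves exactly its special case $\alpha_{v_{j+1}v_j}=\alpha_{v_jv_{j-1}}\alpha_{v_{j-1}v_{j-2}}\alpha_{v_jv_{j-1}}$ by the same edge-chasing argument), and that antipodality of the convex cycle forces $(\alpha_{v_0v_{2i-1}}\alpha_{v_1v_0})^{i}=1$. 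The only divergence is at the very end, where you get the product identity by direct telescoping $(\alpha_{v_0v_1}\alpha_{v_1v_2})(\alpha_{v_2v_3}\alpha_{v_3v_4})\cdots=\rho^{\,i}=1$, whereas the paper first identifies the group generated by these automorphisms with the dihedral group of order $2i$ via a transitivity count; your ending is the more economical of the two.
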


\begin{proof}
Since $C$ is a convex cycle, it has its antipodal pairs of edges in relation $\Theta$. The latter implies that every mirror automorphism of a $\Theta$-class with edges on $C$ maps $C$ to $C$. We will first prove that $\alpha_{v_{j+1}v_j}=\alpha_{v_{j}v_{j-1}}\alpha_{v_{j-1}v_{j-2}}\alpha_{v_jv_{j-1}}$ for every $j\in \Z_{2i}$. Let $xy$ be an arbitrary edge from $F_{v_{j+1}v_j}$. Since $\alpha_{v_jv_{j-1}}$ maps $C$ to itself, it must map $v_{j+1}v_{j}$ to $v_{j-2}v_{j-1}$, thus it also maps $F_{v_{j+1}v_j}$ to $F_{v_{j-1}v_{j-2}}$. This implies that $\alpha_{v_jv_{j-1}}$ maps edge $xy$ to an edge in $F_{v_{j-1}v_{j-2}}$. Say $w=x^{\alpha_{v_jv_{j-1}}}$ and $z=y^{\alpha_{v_jv_{j-1}}}$. By definition of a mirror automorphism, $\alpha_{v_{j-1}v_{j-2}}$ maps $w$ to $z$ and vice versa. Thus $\alpha_{v_{j}v_{j-1}}\alpha_{v_{j-1}v_{j-2}}\alpha_{v_jv_{j-1}}$ maps the pair $(x,y)$ to the pair $(y,x)$ for every edge $xy\in F_{v_{j+1}v_j}$. By Corollary \ref{cor:mirror}, this automorphism must equal $\alpha_{v_{j+1}v_j}$.

Now we prove that $\alpha_{v_{j+1}v_j}=\alpha_{v_1v_0}(\alpha_{v_0v_{2i-1}}\alpha_{v_1v_0})^{j}.$  We will prove the assertion by induction on $j$. It clearly holds for $j=0$, while for $j=1$ we have $\alpha_{v_{2}v_1}=\alpha_{v_1v_0}\alpha_{v_0v_{2i-1}}\alpha_{v_1v_0}$ by the previous paragraph. We calculate:
\begin{align*}
\alpha_{v_{j+1}v_j}&=\alpha_{v_{j}v_{j-1}}\alpha_{v_{j-1}v_{j-2}}\alpha_{v_jv_{j-1}}\\
&=\alpha_{v_1v_0}(\alpha_{v_0v_{2i-1}}\alpha_{v_1v_0})^{j-1}\alpha_{v_1v_0}(\alpha_{v_0v_{2i-1}}\alpha_{v_1v_0})^{j-2}\alpha_{v_1v_0}(\alpha_{v_0v_{2i-1}}\alpha_{v_1v_0})^{j-1}\\
&=\alpha_{v_1v_0}\alpha_{v_0v_{2i-1}}(\alpha_{v_1v_0}\alpha_{v_0v_{2i-1}})^{j-2}\alpha_{v_1v_{0}}\alpha_{v_1v_0}(\alpha_{v_0v_{2i-1}}\alpha_{v_1v_0})^{j-2}\alpha_{v_1v_0}(\alpha_{v_0v_{2i-1}}\alpha_{v_1v_0})^{j-1}\\
&=\alpha_{v_1v_0}(\alpha_{v_0v_{2i-1}}\alpha_{v_1v_0})^{j}
\end{align*}
Notice that the latter implies that $\alpha_{v_0v_{2i-1}}=\alpha_{v_iv_{i-1}}=\alpha_{v_1v_0}(\alpha_{v_0v_{2i-1}}\alpha_{v_1v_0})^{i-1}$ (first equation holds since antipodal edges on $C$ are in relation $\Theta$), thus 
\begin{equation}\label{eq:1}
1=(\alpha_{v_0v_{2i-1}}\alpha_{v_1v_0})^{i}.
\end{equation}

Let $A_{C}$ be the subgroup of automorphisms of $G$ generated by the mirror automorphisms $\alpha_{v_1v_0},\ldots,\alpha_{v_{2i}v_{2i-i}}$. We have proved that $\alpha_{v_0v_{2i-1}}$ and $\alpha_{v_1v_0}$ generate $A_C$. Since (1) holds, $A_C$ must be a quotient of the group $\langle \alpha_{v_0v_1},\alpha_{v_0v_{2i-1}} \mid (\alpha_{v_0v_1}\alpha_{v_0v_{2i-1}})^{i}=1, \alpha_{v_0v_1}^2=1, \alpha_{v_0v_{2i-1}}^2=1 \rangle$, which is a Coxeter group of order $2i$. Since $A_{C}$ acts transitively on $C$, it must have at least $2i$ elements. Thus $A_{C}$ is isomorphic to $\langle \alpha_{v_0v_1},\alpha_{v_0v_{2i-1}} \mid (\alpha_{v_0v_1}\alpha_{v_0v_{2i-1}})^{i}=1, \alpha_{v_0v_1}^2=1, \alpha_{v_0v_{2i-1}}^2=1 \rangle$ which is precisely the group of all edge symmetries of $C$. In particular it holds,
$$\alpha_{v_0v_1}\alpha_{v_1v_2}\cdots \alpha_{v_{2i-1}v_0}=\alpha_{v_0v_{2i-1}}\alpha_{v_{2i-1}v_{2i-2}}\cdots \alpha_{v_{1}v_0}=1.$$
%
%
\end{proof}

\begin{lem}\label{lem:matroid}
Let $C=(v_0v_1\ldots v_{2i-1})$ be a convex cycle in a mirror graph. Then any edge $ab \in F_{v_0v_1}$ is in the intersection of $W_{v_2v_1},W_{v_3v_2},\ldots, W_{v_iv_{i-1}}$ or in the intersection of the complements of these sets.
\end{lem}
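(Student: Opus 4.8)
~\textbf{Proof plan.}

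The statement asserts that the $\Theta$-class $F_{v_0v_1}$ does not ``cross'' the convex cycle $C$ in a separating way relative to the nested half-space structure imposed by the edges $v_1v_2, v_2v_3,\ldots,v_{i-1}v_i$: every edge $ab\in F_{v_0v_1}$ lies either in all of $W_{v_2v_1},W_{v_3v_2},\ldots,W_{v_iv_{i-1}}$ or in none of them. My plan is to reduce this to a statement about how the $\Theta$-classes $F_{v_jv_{j+1}}$ of $C$ behave on a fixed edge $ab$, and to extract the needed information from the reflection relations established in Lemma~\ref{lem:cycle}. The geometric content is that, since $C$ is convex, its half of the hyperplane arrangement looks locally like a $2i$-gon arrangement, and the edges $v_1v_2,\ldots,v_{i-1}v_i$ are precisely the consecutive hyperplanes on one side of $F_{v_0v_1}$; an edge parallel to $F_{v_0v_1}$ must sit on one coherent side of this fan.

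First I would fix $ab\in F_{v_0v_1}$ and, using the fact (from Lemma~\ref{lem:path} applied along a shortest path, together with Corollary~\ref{cor:mirror}) that the mirror automorphisms of the $\Theta$-classes on $C$ permute these classes exactly as the dihedral group $A_C$ of Lemma~\ref{lem:cycle} permutes the edges of $C$, I would locate $ab$ inside a convex traverse from $v_0v_1$ to $ab$. The key observation is that for each $j\in\{1,\ldots,i-1\}$ the class $F_{v_jv_{j+1}}$ is a cut of $G$, so $ab$ lies either in $W_{v_{j+1}v_j}$ or in $W_{v_jv_{j+1}}$, and I want to show these choices are all equal (either all ``$W_{v_{j+1}v_j}$'' or all ``$W_{v_jv_{j+1}}$''). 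I would do this by comparing the side of $ab$ with respect to $F_{v_jv_{j+1}}$ to its side with respect to $F_{v_{j+1}v_{j+2}}$: since consecutive edges $v_jv_{j+1}$ and $v_{j+1}v_{j+2}$ share the vertex $v_{j+1}$ and span a convex cycle, and since no $\Theta$-class separates the two endpoints of an edge, moving $ab$ across $F_{v_jv_{j+1}}$ would force it across $F_{v_{j+1}v_{j+2}}$ as well only if the two classes genuinely cross $C$ antipodally — which by convexity they do not for the ``short'' side spanned by $v_1,\ldots,v_i$.

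More concretely, I would argue by a parity/shortest-path count. Because $C$ is convex, $v_1 v_2 \cdots v_i$ is a shortest $v_1,v_i$-path, so its edges lie in $i-1$ pairwise distinct $\Theta$-classes, none of which is $F_{v_0v_1}$. Take an edge $ab\in F_{v_0v_1}$ with $a$ on the $v_0$-side. The claim is equivalent to showing that the indicator sequence $\bigl(\mathbf 1[a\in W_{v_{j+1}v_j}]\bigr)_{j=1}^{i-1}$ is constant. I would establish this by tracking a shortest path from $a$ to $v_1$ (respectively from $b$ to $v_0$) and applying the traverse machinery from~\cite{jaz}: a convex traverse from $v_0v_1$ to $ab$ has sides that are shortest paths, and along such a side each of the classes $F_{v_jv_{j+1}}$, $j=1,\ldots,i-1$, is crossed at most once and all on the same side of the traverse, forcing the constancy. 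The main obstacle I anticipate is this last step — ruling out the ``mixed'' configuration where $ab$ is separated from $v_0v_1$ by some but not all of $F_{v_2v_1},\ldots,F_{v_iv_{i-1}}$; this is where convexity of $C$ (so that antipodal edges are $\Theta$-related and the relevant classes form one coherent fan) must be used essentially, presumably by showing that a mixed configuration would produce a closed walk crossing some $\Theta$-class an odd number of times, contradicting the parity property of $\Theta$ in partial cubes.
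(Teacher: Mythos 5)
Your setup matches the paper's: fix $ab\in F_{v_0v_1}$, take a convex traverse $T$ from $v_0v_1$ to $ab$, and use that its sides are shortest paths crossing each $\Theta$-class at most once, so that the question becomes whether the side crosses all of $F_{v_1v_2},\ldots,F_{v_{i-1}v_i}$ or none of them. But the heart of the lemma is exactly the step you defer to the end and label ``the main obstacle I anticipate,'' and the mechanism you propose for it --- producing a closed walk that crosses some $\Theta$-class an odd number of times --- cannot work on its own. The parity property of $\Theta$ holds in every partial cube, yet the conclusion of the lemma is false for general convex cycles in general partial cubes; so any correct argument must use the mirror automorphisms in an essential, quantitative way, and your sketch never actually deploys Lemma~\ref{lem:cycle} beyond mentioning it. Likewise, the claim that the relevant classes are ``all on the same side of the traverse, forcing the constancy'' is an assertion of the conclusion, not a proof of it.

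The paper closes this gap as follows. Assuming $a,b\in W_{v_2v_1}$, the traverse $T$ must contain a convex cycle $D$ carrying an edge of $F_{v_1v_2}$ (since $F_{v_1v_2}$ is a cut separating $v_0v_1$ from $ab$), and $D$ also carries edges of $F_{v_0v_1}$ by the definition of a traverse. Hence both $\alpha_{v_0v_1}$ and $\alpha_{v_1v_2}$ map $D$ to itself. By Lemma~\ref{lem:cycle} every $\alpha_{v_kv_{k+1}}$ is a word in these two automorphisms, so it too stabilizes $D$; and since $\alpha_{v_kv_{k+1}}$ (for $k=\lfloor j/2\rfloor$) carries $F_{v_0v_1}$ or $F_{v_1v_2}$ onto $F_{v_{j-1}v_j}$, the cycle $D$ must contain an edge of every $F_{v_{j-1}v_j}$, $3\le j\le i$, which places $a,b\in W_{v_jv_{j-1}}$ for all $j$. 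The symmetric case $a,b\in W_{v_1v_2}$ is handled by running the same argument from $v_iv_{i+1}$. This dihedral-action argument is the missing idea in your proposal; without it (or an equivalent use of Corollary~\ref{cor:mirror} and Lemma~\ref{lem:cycle}) the ``mixed configuration'' is not excluded.
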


\begin{proof}
Assume that $ab \in F_{v_0v_1}$ is in $W_{v_2v_1}$. Consider a convex traverse from $v_0v_1$ to $ab$. Since $v_0,v_1 \in W_{v_1v_2}$, $a,b \in W_{v_2v_1}$ and $F_{v_1v_2}$ is a cut, there must be a convex cycle $D$ on $T$, that includes an edge from $F_{v_1v_2}$. Pick an arbitrary $v_{j-1}v_{j}$ for $3\leq j\leq i$. Then for $k=\lfloor j/2 \rfloor$, $\alpha_{v_kv_{k+1}}$ maps $v_{j-1}v_{j}$ to $v_0v_1$ or $v_1v_2$ (and vice versa)  since it maps $C$ to $C$. Hence it maps $F_{v_0v_1}$ or $F_{v_1v_2}$ to $F_{v_{j-1}v_{j}}$. On the other hand, by Lemma \ref{lem:cycle}, $\alpha_{v_kv_{k+1}}$ can be expressed as a combination of automorphisms $\alpha_{v_0v_{1}}$ and $\alpha_{v_1v_{2}}$. Since each such automorphism maps $D$ to $D$, also $\alpha_{v_kv_{k+1}}$ maps $D$ to $D$. Thus there must be an edge on $D$ in $F_{v_{j-1}v_{j}}$. This implies $a,b\in W_{v_jv_{j-1}}$.

On the other hand, $ab \in F_{v_0v_1}$ can lie in $W_{v_1v_2}=W_{v_{i+2}v_{i+1}}$. Now the result follows if we consider a traverse from $v_iv_{i+1}$ to $ab$ and use the same arguments.
\end{proof}

One could easily deduce from Lemma \ref{lem:matroid}, using results from \cite{handa1990characterization}, that every mirror graph is in fact a tope graph of an oriented matroid. We will not make this argumentation since this result will follow from the main theorem. 

Denote with $\mathcal{C}(G)$ the 2-dimensional cell complex whose 2-cells are obtained by replacing each convex cycle $C$ of length $2j$ of $G$ by a regular Euclidean polygon $[C]$ with $2j$ sides. In \cite{chepoi2016partial} it was proved that for a partial cube $G$, the complex $C(G)$ is simply connected.

\begin{thm}\label{thm:main}
For a graph $G$ the following statements are equivalent:
\begin{enumerate}[(i)]
\item $G$ is a mirror graph.
\item $G$ is the Cayley graph of a finite Coxeter group.
\item $G$ is the tope graph of a reflection arrangement.
\end{enumerate}
\end{thm}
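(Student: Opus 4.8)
The equivalence (ii)$\Leftrightarrow$(iii) is the classical correspondence already recalled in the introduction (\cite[Theorem~2.3.7]{MR1744046}), so the plan is to prove (iii)$\Rightarrow$(i) and (i)$\Rightarrow$(ii), which together with the classical result closes the cycle (i)$\Rightarrow$(ii)$\Leftrightarrow$(iii)$\Rightarrow$(i). The direction (iii)$\Rightarrow$(i) I expect to be routine: let $G$ be the tope graph of a reflection arrangement $\{H_1,\dots,H_m\}$, take the mirror partition to be the $\Theta$-classes (edges grouped by the hyperplane separating the two incident chambers), and for the class of $H_i$ take $\alpha_i=\sigma_{H_i}$ acting on chambers. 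Since the arrangement is a reflection arrangement, $\sigma_{H_i}$ permutes the hyperplanes and hence is an automorphism of $G$; it interchanges the two chambers adjacent across any facet lying on $H_i$, verifying~(i) of the definition, and it maps the chambers on one side of $H_i$ isomorphically onto those on the other side, verifying~(ii). Thus $G$ is a mirror graph.

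The substantial direction is (i)$\Rightarrow$(ii). Let $A\le\textrm{Aut}(G)$ be generated by all mirror automorphisms $\alpha_e$, fix a base vertex $v_0$, and let $S=\{\alpha_e:e\ni v_0\}$ be the mirror automorphisms of the $\Theta$-classes incident with $v_0$; each $s\in S$ sends $v_0$ to a neighbour, and distinct elements of $S$ to distinct neighbours. First I would prove that $A=\langle S\rangle$ acts transitively on $V(G)$. The key observation is that any automorphism conjugates a mirror automorphism to the mirror automorphism of the image $\Theta$-class; hence if $w=v_0^{\,\beta}$ with $\beta\in\langle S\rangle$, then the mirror automorphisms of all $\Theta$-classes incident with $w$ are $\langle S\rangle$-conjugates of elements of $S$, so they lie in $\langle S\rangle$. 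Walking along a shortest $v_0,u$-path and applying this at each step shows that every $u$ is reached from $v_0$ by an element of $\langle S\rangle$, giving simultaneously transitivity and $A=\langle S\rangle$.

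It then remains to show that $A$ is a Coxeter group with Coxeter generating set $S$ acting \emph{regularly}, for then the orbit map $\beta\mapsto v_0^{\,\beta}$ is a local isomorphism identifying $G$ with $\textrm{Cay}(A,S)$, and $A$ is finite because $G$ is. The generators are involutions by Corollary~\ref{cor:mirror}. Each convex cycle supplies a dihedral relation: by Lemma~\ref{lem:cycle} the mirror automorphisms of the two $\Theta$-classes meeting at a vertex of a convex $2\ell$-gon generate a dihedral group of order $2\ell$, so the convex $2m_{ij}$-gon that is the rank-two face at $v_0$ spanned by the edges $e_i,e_j$ yields $(\alpha_{e_i}\alpha_{e_j})^{m_{ij}}=1$ (existence and length of this face being controlled by Lemma~\ref{lem:inter} and the matroidal Lemma~\ref{lem:matroid}). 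Consequently $A$ is a quotient $p\colon W(M)\twoheadrightarrow A$ of the abstract Coxeter group $W(M)$ with Coxeter matrix $M=(m_{ij})$.

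The hard part, and the main obstacle, is to show that $p$ is an isomorphism and that the action is regular — equivalently, that the dihedral relations above form a complete set of relations and that no nontrivial element of $A$ fixes $v_0$. Here I would invoke the simple connectedness of $\mathcal{C}(G)$ from \cite{chepoi2016partial}. The abstract group $W(M)$ has Cayley graph $\Gamma=\textrm{Cay}(W(M),S)$, which by the already-proved direction (iii)$\Rightarrow$(i) is itself a mirror graph whose complex $\mathcal{C}(\Gamma)$ is simply connected and whose convex cycles are exactly the rank-two faces. The map $w\mapsto v_0^{\,p(w)}$ is a surjective graph homomorphism $\Gamma\to G$ that is a local isomorphism (it sends the $S$-edges at each vertex bijectively onto the edges at the image vertex) and carries rank-two faces of $\Gamma$ onto convex cycles of $G$; it is therefore a covering of the two simply connected complexes $\mathcal{C}(\Gamma)$ and $\mathcal{C}(G)$, and a covering of a simply connected space by a simply connected space is an isomorphism. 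Hence $p$ is an isomorphism, $A\cong W(M)$ acts regularly, and $G\cong\textrm{Cay}(A,S)$ is the Cayley graph of the finite Coxeter group $A$. The delicate point inside this argument, which I expect to require the most care, is verifying that \emph{every} convex cycle of $G$ is matched by a rank-two face, i.e.\ that the relations extracted from Lemma~\ref{lem:cycle} are precisely the Coxeter relations $(s_is_j)^{m_{ij}}=1$ rather than longer ones; this is exactly where Lemma~\ref{lem:matroid} is needed, guaranteeing that the two-dimensional faces of $G$ behave as in a genuine reflection arrangement.
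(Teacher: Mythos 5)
Your overall architecture coincides with the paper's: (ii)$\Leftrightarrow$(iii) is the classical correspondence, (iii)$\Rightarrow$(i) is routine via the hyperplane reflections, and the substance is (i)$\Rightarrow$(ii), which both you and the paper settle by showing that the group $A$ generated by the mirror automorphisms is the Coxeter group on the generators at a base vertex, using the simple connectedness of $\mathcal{C}(G)$ to see that all relations are consequences of the convex-cycle relations. Your covering-space packaging of the final step is an equivalent reformulation of the paper's argument (Sabidussi's theorem plus derivation of all relations from convex cycles), and your conjugation argument for transitivity is a legitimate substitute for Lemma~\ref{lem:path}. However, there is a genuine gap at exactly the point you flag as ``delicate'': you assert that the pair of edges $e_i,e_j$ at $v_0$ spans a convex $2m_{ij}$-gon with $m_{ij}$ equal to the order of $\alpha_{e_i}\alpha_{e_j}$, and that every convex cycle of $G$ is such a rank-two face, with ``existence and length being controlled by Lemma~\ref{lem:inter} and Lemma~\ref{lem:matroid}.'' Neither lemma delivers this by itself. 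Lemma~\ref{lem:inter} only produces \emph{some} convex cycle meeting both $\Theta$-classes, possibly far from $v_0$; one must then show that the two relevant edges on that cycle are incident (the paper does this with a parity argument on a closed walk built from two convex traverses, reaching a contradiction with Lemma~\ref{lem:matroid}), identify the $\Theta$-classes of all edges of the resulting dihedral walk through $v_0$, prove that this walk is a convex cycle and is the unique convex cycle through $e_i,e_j$ (the paper's Claim~\ref{claim:cycle}), and finally show that every convex cycle of $G$ is a translate of such a cycle (Claim~\ref{claim:relations}). This is the heart of the proof, roughly a page of traverse arguments, and without it your map is not known to be a covering of $2$-complexes.

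A second, smaller defect is circularity: you justify the simple connectedness of $\mathcal{C}(\Gamma)$ and the identification of its convex cycles with rank-two faces by applying the already-proved direction (iii)$\Rightarrow$(i) to $\Gamma=\textrm{Cay}(W(M),S)$. That direction applies only to tope graphs of reflection arrangements, i.e.\ to \emph{finite} Coxeter groups, and at that stage you do not yet know that $W(M)$ is finite --- finiteness is only obtained at the end from $W(M)\cong A$. This is repairable: take for $\mathcal{C}(\Gamma)$ the presentation $2$-complex of $\langle S\mid (s_is_j)^{m_{ij}}=1\rangle$, which is simply connected by standard combinatorial group theory, and compare it with $\mathcal{C}(G)$ directly. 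But as written the appeal to (iii)$\Rightarrow$(i) for $\Gamma$ is not available.
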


\begin{proof}
The crucial part is to prove that $\textrm{(i)} \implies \textrm{(ii)}$. Assume that $G$ is a mirror graph. We will first prove that $G$ is a Cayley graph. Let $A$ be the subgroup of $\textrm{Aut}(G)$ generated by all the mirror automorphisms. Group $A$ acts transitively on the vertices of $G$. Recall that by a theorem of Sabidussi \cite{sabidussi1958class}, $G$ is a Cayley graph of a group $A$ if $A$ acts transitively on the vertices of $G$ and the stabilizers of the vertices are trivial. Therefore, to prove the assertion it suffice to prove that for an arbitrary vertex $v\in V(G)$ its stabilizer is trivial. Assume that an automorphism $\alpha_{a_1b_1}\alpha_{a_2b_2}\ldots \alpha_{a_nb_n}$ maps $v$ to itself. Let $v_1=v^{\alpha_{a_1b_1}}$, $v_2=v_1^{\alpha_{a_2b_2}}$, $\ldots$, $v=v_{n-1}^{\alpha_{a_nb_n}}$. By Lemma \ref{lem:path}, $\alpha_{a_1b_1}\alpha_{a_2b_2}\ldots \alpha_{a_nb_n}$ equals to $\alpha_{vu_1}\alpha_{u_1u_2}\ldots \alpha_{u_{m-1}v}$ where $(vu_1u_2u_3\ldots u_{m-1})$ is a closed walk from $v$ to $v$ passing $v_1,v_2,\ldots, v_{m-1}$. Graph $G$ is a partial cube, hence its 2-dimensional cell complex $\mathcal{C}(G)$ made out of the convex cycles is simply connected. Since for each convex cycle $(a_0a_1\ldots a_{2i-1})$ in $G$, by Lemma \ref{lem:cycle}, holds that $a_{a_0a_1}a_{a_1a_2}\ldots a_{a_{2i-1}a_0}=1$, the latter implies that we can transform $a_{vu_1}a_{u_1u_2}\ldots a_{u_{m-1}v}$ to the identity using equalities on convex cycles. Thus the stabilizer of $v$ is trivial, and $G$ is a Cayley graph.

Now pick an arbitrary vertex $v\in V(G)$ and let $vv_1,vv_2,\ldots,vv_k$ be the edges incident with it. Then $G\cong \textrm{Cay}(A,\{\alpha_{vv_1},\ldots,\alpha_{vv_k}\})$. We want to identify group $A$ to understand the structure of $G$. First identify vertices of $G$ with elements of $A$ in the standard way: identify the chosen vertex $v$  with the identity 1 of $A$ and every vertex $u$ of $G$ with the unique automorphism $\alpha\in A$ such that $u=v^{\alpha}$.  By definition of the Cayley graph, every relation of generators, say $\alpha_{vv_{i_p}}\ldots\alpha_{vv_{i_2}} \alpha_{vv_{i_1}}=1$, gives us a closed walk on vertices $1,\alpha_{vv_{i_1}},\alpha_{vv_{i_2}}\alpha_{vv_{i_1}},\alpha_{vv_{i_3}}\alpha_{vv_{i_2}}\alpha_{vv_{i_1}}, \ldots,\alpha_{vv_{i_{p-1}}}\ldots \alpha_{vv_{i_2}}\alpha_{vv_{i_1}}$. 
For the latter we will say that a relation in $A$ generates the closed walk in $G$.
First we prove the following claim:

\begin{claim}\label{claim:cycle}
Every pair of incident edges $vv_i, vv_j$ lies on the unique convex cycle $C$ generated by the relation $(a_{vv_i}a_{vv_j})^{k_{ij}}=1$, where $k_{ij} \geq 2$ equals half of the length of $C$.
\end{claim}

\begin{proof}
Let $C$ be the closed walk generated by the relation $(a_{vv_i}a_{vv_j})^{k_{ij}}=1$, where $k_{ij}$ is as small as possible ($k_{ij}$ exists since $A$ is finite).
First we want to identify $\Theta$-classes of edges that lie on $C$.
Let $D$ be a convex cycle in $G$ with nontrivial intersection with $F_{vv_i}$ and $F_{vv_j}$, provided by Lemma \ref{lem:inter}.  Denote with $xy$ an edge on $D$ that is $F_{vv_i}$, and without loss of generality assume that $D$ and $xy$ are such that the distance between $vv_i$ and $xy$ is as small as possible. 
There are two antipodal edges on $D$ that are in $F_{vv_j}$, let $wz$ be the one that is in $W_{vv_i}$.
 
Now we prove that $xy$ and  $wz$ are incident.
Let $T_i$ be a convex traverse connecting $vv_i$ and $xy$, and $T_j$ a convex traverse connecting $vv_j$ and $wz$. There is no edge in $F_{vv_j}$ on $T_i$ since otherwise there would exist a convex cycle with a nontrivial intersection with $F_{vv_i}$ and $F_{vv_j}$ but closer to $vv_i$ than $D$. Also, there is no edge in $F_{vv_i}$ on $T_j$ since $vv_j,wz \in W_{vv_i}$. Assume that there is an edge $ab$ on the shortest path connecting $xy$ and $wz$ on $D$, say $xy\in W_{ab}$. Then the sides of $T_i$ and $T_j$, together with the shortest path connecting $xy$ and $wz$ form a closed walk. Since a closed walk must pass each $\Theta$-class even number of times, this implies that one of $T_i,T_j$ has an edge in $F_{ab}$. Without loss of generality assume that $T_i$ has an edge in $F_{ab}$. In this case the edge $vv_i$ is in $W_{ba}$, but not in $W_{v_jv}$. A contradiction with Lemma \ref{lem:matroid}. Thus $xy$ and  $wz$ are incident.

Denote with $(u_0u_1\ldots u_{2k-1})$ the vertices of $D$, where $wz=u_0u_1$, $xy=u_0u_{2k-1}$ and $2k$ is the length of $D$.
First notice that $\alpha_{vv_j}=\alpha_{u_0u_1}$ and $\alpha_{vv_i}=\alpha_{u_0u_{2k-1}}$ since $vv_j\Theta u_0u_1$ and $vv_i\Theta u_0u_{2k-1}$. By Lemma \ref{lem:cycle}, the smallest $k_{ij}$ such that $(\alpha_{u_0u_{2k-1}}\alpha_{u_0u_{1}})^{k_{ij}}=1$ 
is $k$, i.e.~$k_{ij}$ is half the length of $D$.
Thus also $C$ has length $2k$. 
%

Edges on $C$ are
 connecting vertices of the form $(\alpha_{vv_i}\alpha_{vv_j})^{l}$ and $\alpha_{vv_j}(\alpha_{vv_i}\alpha_{vv_j})^{l}$, 
 or $\alpha_{vv_j}(\alpha_{vv_i}\alpha_{vv_j})^{l}$ and $(\alpha_{vv_i}\alpha_{vv_j})^{l+1}$, for some $0\leq l< k$. Automorphism $\alpha_{vv_j}(\alpha_{vv_i}\alpha_{vv_j})^{l}$ (by the right action) maps vertex $(\alpha_{vv_i}
 \alpha_{vv_j})^{l}$ to $\alpha_{vv_j}$ and vertex $\alpha_{vv_j}(\alpha_{vv_i}\alpha_{vv_j})^{l}$ to $	1$. 
 But $\alpha_{vv_j}(\alpha_{vv_i}\alpha_{vv_j})^{l}=\alpha_{u_0u_1}(\alpha_{u_0u_{2k-1}} \alpha_{u_0u_1})^l$, and, by Lemma \ref{lem:cycle},  $\alpha_{u_0u_1}(\alpha_{u_0u_{2k-1}} \alpha_{u_0u_1})^l=\alpha_{u_{l+1}u_l}$. Automorphism $\alpha_{u_{l+1}u_l}$ maps $F_{u_{2l+1}u_{2l}}$ to $F_{u_0u_1}$. Since the edge between $(\alpha_{vv_i}\alpha_{vv_j})^{l}$ and $\alpha_{vv_j}
 (\alpha_{vv_i}\alpha_{vv_j})^{l}$ is mapped to edge between 1 and $vv_j$ which is in $F_{u_0u_1}$, it must be in $F_{u_{2l+1}u_{2l}}$.

 
Similar analysis can be made for the edges between $\alpha_{vv_j}(\alpha_{vv_i}\alpha_{vv_j})^{l}$ and $(\alpha_{vv_i}\alpha_{vv_j})^{l+1}$, for $0\leq l< k$.  Thus all the edges on $C$ are in the same $\Theta$-classes as edges on $D$. Now we can prove that $C$ is convex. Let $ab$ be an edge on $C$ different from $vv_i$ but in $F_{vv_i}$. Consider a convex traverse $T$ from $vv_i$ to $ab$. Since there is a path on $C$ connecting both edges that includes only edges that are in $\Theta$ relation with edges on $D$, we deduce that also all the edges on $T$ are in $\Theta$ relation with edges on $D$. By Lemma \ref{lem:matroid}, it follows that $T$ must be a single convex cycle $E$ of length the same as $D$, that is $2k$. 

For the uniqueness, we shall prove that no two convex cycles can share more than an edge or a vertex. Assume that two different convex cycles $D_1,D_2$ share two vertices. Since they are convex, they share a shortest path connecting this two vertices. Now, assume that there are at least two edges on this path. Let $x_0x_1x_2$ be a subpath that is shared by both, such that $x_0$ is also incident with non-identical edges $x_0y_1$ and $x_0y_2$, such that they lie on $D_1\backslash D_2$ and $D_2\backslash D_1$, respectively. The mirror automorphism $\alpha_{x_0x_1}$ maps $D_1$ onto $D_1$ and $D_2$ onto $D_2$. But this is impossible since $x_1x_2$ cannot simultaneously get mapped to $x_0y_1$ and $x_0y_2$.
\end{proof}

 We claim that $A=\langle \alpha_{vv_1},\alpha_{vv_2}, \ldots , \alpha_{vv_k} \mid (\alpha_{vv_i}\alpha_{vv_j})^{k_{ij}}=1 \rangle$, where $k_{ii}=1$ and $k_{ij}\geq 2$ is given by Claim \ref{claim:cycle}. Since $G$ is a connected Cayley graph of $A$, $A$ is generated by $\alpha_{vv_1},\alpha_{vv_2}, \ldots , \alpha_{vv_k}$ and the relations hold by Claim \ref{claim:cycle} and Lemma \ref{lem:cycle}. Assume that for the above generators some other relation holds, say $\alpha_{vx_1}\alpha_{vx_2} \ldots  \alpha_{vx_j}=1$ for $x_i\in \{v_1,\ldots,v_k\}$ for all $i\in \{1,\ldots,j\}$. We want to prove that this relation can be derived from the above relations. Again identify vertices of $G$ with elements of $A$ in the standard way.
 
\begin{claim}\label{claim:relations}
Let $D$ be a convex cycle in $G$. Then $D$ lies on vertices $\alpha_{vv_i}a$, $\alpha_{vv_j}\alpha_{vv_i}a$, $\alpha_{vv_i}\alpha_{vv_j}\alpha_{vv_i}a$,\ldots, $(\alpha_{vv_j}\alpha_{vv_i})^{k_{ij}}a$, for some generators $\alpha_{vv_i},\alpha_{vv_j}$ and some $a\in A$.
\end{claim} 
\begin{proof}
Pick a vertex $u$ on $D$ and let $a\in A$ be the map that maps $v$ to $u$, i.e.~$u$ is identified with $a$. Let $\alpha_{vv_i}a,\alpha_{vv_j}a$ be the neighbors of $a$ on $D$. Then automorphism $a^{-1}\in A$ maps $D$ to a convex cycle $C$ incident with $1,\alpha_{vv_i},\alpha_{vv_j}$. By Claim \ref{claim:cycle} the only such cycle $C$ is the cycle on vertices $\alpha_{vv_i}$, $\alpha_{vv_j}\alpha_{vv_i}$, $\alpha_{vv_i}\alpha_{vv_j}\alpha_{vv_i}$,\ldots, $(\alpha_{vv_j}\alpha_{vv_i})^{k_{ij}}$. Automorphism $a$ maps $C$ to $D$ which proves the claim.
\end{proof}

Assume that the relation $\alpha_{vx_1}\alpha_{vx_2} \ldots  \alpha_{vx_j}=1$ holds in $A$. As above it generates a closed walk in $G$. Since $G$ is a partial cube, it 2-dimensional cell complex $\mathcal{C}(G)$ is simply connected, thus every closed walk is generated by convex cycles in $G$. In the language of the generators this implies that all the relations in $G$ can be derived from relations on the convex cycles of $G$. By Claim \ref{claim:relations}, the relations on the convex cycles of $G$ are derived from asserted relations.

We have proved that $A=\langle \alpha_{vv_1},\alpha_{vv_2}, \ldots , \alpha_{vv_k} \mid (\alpha_{vv_i}\alpha_{vv_j})^{k_{ij}}=1 \rangle$. Thus $A$ is a finite Coxeter group and $\textrm{(i)} \implies \textrm{(ii)}$ follows. As described in the preliminaries, the Cayley graphs of the finite Coxeter groups are in one to one correspondence with the tope graphs of the reflection arrangements thus $\textrm{(ii)} \iff \textrm{(iii)}$.

Finally, assume that $G$ is the tope graph of a reflection arrangement.  Then we can partition the edges of $G$ into sets corresponding to the hyperplanes in the arrangement. Moreover, the reflection of each hyperplane  maps chambers to chambers and hyperplanes to hyperplanes, and thus induces  a mirror automorphism of $G$. We deduce that $G$ is a mirror graph and $\textrm{(iii)}\implies \textrm{(i)}$ follows.
\end{proof}

Theorem \ref{thm:main} implies that via Algorithm \ref{alg:recognition} also the Cayley graphs of the finite Coxeter groups and the tope graphs the of the reflection arrangements can be recognized in polynomial time.

\section*{Acknowledgment}
Author wishes to express his gratitude to Sandi Klavžar for the useful comments on the text.

\section*{References}
\bibliographystyle{plain}
\bibliography{biblio}

\end{document}